\numberwithin{equation}{section}
\subjclass[2010]{Primary ...; ...}
\numberwithin{equation}{section}
\newtheorem{theorem}{Theorem}[section]
\newtheorem{lemma}[theorem]{Lemma}
\newtheorem{rem}[theorem]{Remark}
\newtheorem{prop}[theorem]{Proposition}
\theoremstyle{definition}
\newtheorem{definition}{Definition}[section]
\title{F--manifolds, F$_\text{man}$--algebras and Poisson--algebra distributions }
\author[S. Castañeda-Montoya]{Santiago Castañeda-Montoya}
\address{FaMAF, Universidad Nacional de Córdoba,    ciudad universitaria, (5000) Córdoba, Argentina.}
\email{santiago.castaneda@mi.unc.edu.ar}
\author[A. Torres-Gomez]{Alexander Torres-Gomez}
\address{Instituto de Matemáticas, FCEyN, Universidad de Antioquia, 50010  Medellín, Colombia}
\email{galexander.torres@udea.edu.co}
	\subjclass{17A30, 22E60, 17B63, 53D45, 53C12, 53C29, 53C05.}
\keywords{F--manifolds, Lie groups, distributions, foliations, holonomy Lie algebra.}
\begin{document}

\begin{abstract} 
This paper investigates the geometric and algebraic interplay between F--manifolds and a newly defined class of structures termed F$_\text{man}$--algebras. We specialize our study to the category of F--Lie groups, characterized by a Lie group whose associated commutative and associative product of vector fields is left--invariant.  We construct a canonical connection on Lie groups uniquely determined by the F$_\text{man}$--algebraic data, and subsequently characterize its curvature tensor and holonomy Lie algebra. A central feature of our investigation is the introduction of the Poisson--algebra distribution, arising from a canonical Poisson subalgebra within the F$_\text{man}$--algebra. We establish the integrability of this distribution, which induces a foliation of the F--Lie group and facilitates a local splitting theorem. The theoretical framework is illustrated through an in--depth analysis of the Heisenberg Lie algebra.

\end{abstract}

\maketitle

\tableofcontents

\section{Introduction}

The study of Frobenius manifolds was pioneered by Dubrovin \cite{Du}, providing a crucial geometrical perspective on the Witten--Dijkgraaf--Verlinde--Verlinde equation, which originated in two--dimensional topological field theories. Building on this foundation, Hertling and Manin \cite{HM} later introduced the concept of an F--manifold (originally called a weak Frobenius manifold). This geometric construction relaxes some of the initial conditions required for a Frobenius manifold, serving as the primary motivation for our current investigation. Notably, all Frobenius manifolds are inherently F--manifolds.\\

F--manifolds are ubiquitous, appearing in diverse mathematical context such as singularity theory, quantum cohomology, symplectic geometry, and the theory of integrable systems. For a comprehensive discussion and relevant references, see \cite{He, Ma}. This broad applicability has motivated the study of various associated geometrical and algebraic structures, including double extensions of flat pseudo--Riemannian F--Lie algebras \cite{TGV}, Riemannian and bi--flat F--manifolds \cite{ABLR}, biflat F--structures as differential bicomplexes \cite{AL}, F--algebroids \cite{CMTG}, F-algebra--Rinehart pairs \cite{CMGTG, CMLS}, F--manifolds via pre--Lie algebras \cite{Do}, relationship between Nijenhuis manifolds and F--manifolds \cite{AK}, and Hodge atoms arising from the spectral decomposition of an F--bundle \cite{KKPY}. Studying these structures aims to advance the general understanding of these geometric spaces. \\

Roughly speaking, an F--manifold consists of a smooth manifold $M$ whose $C^\infty(M)$-module of vector fields, $\mathfrak{X}(M)$, is endowed with a commutative and associative product $\circ$. This structure is subject to an integrability constraint known as the Hertling–Manin condition, which relates the Lie bracket and the commutative product through a specific cubic identity (see Definition \ref{DefFman}). The present work focuses on the setting where $M$ is a Lie group and both the vector fields and the commutative product are left--invariant; we refer to such a structure as an F--Lie group.  In this framework, we identify an underlying algebraic structure termed an F$_\text{man}$--algebra (see Definition \ref{DefFmanalg}). Notably, the F$_\text{man}$--algebraic axioms can be derived by relaxing the constraints of a standard Poisson algebra. Consequently, F$_\text{man}$--algebras admit a purely abstract algebraic treatment, independent of the encompassing manifold geometry.\\

On an F--Lie group, we define a canonical connection uniquely determined by the underlying F$_\text{man}$--algebra structure. By investigating the resulting geometric properties, specifically the curvature tensor and the associated holonomy Lie algebra, we establish a geometric characterization of F$_\text{man}$--algebras analogous to the treatment of Poisson algebras in \cite{BB}. Central to our analysis is the identification of a Poisson subalgebra within the F$_\text{man}$--algebra, which induces a Poisson--algebra distribution on the Lie group. We prove that this distribution is completely integrable, thereby inducing a foliation of the F--Lie group.\\

The remainder of this paper is structured as follows. Section \ref{SecFstruc} provides the formal foundations for F--manifolds and F$_\text{man}$--algebras. Section \ref{SecFLieG} is dedicated to the study of F--Lie groups, where we derive the canonical connection associated with the F--structure, compute its curvature tensor, and characterize its holonomy Lie algebra. Within this section, we also define the Poisson--algebra distribution, establish its integrability, and describe the resulting foliation of the F--Lie group. Furthermore, we provide a local splitting theorem for the F--Lie group. The paper concludes with a detailed application of this framework to the Heisenberg Lie algebra, illustrating the interplay between its algebraic and geometric properties.

\section{F--structures}
\label{SecFstruc}

\subsection{F--manifolds}
The concept of an F--manifold was introduced and investigated by Hertling and Manin in \cite{HM}. Intutively, an F--manifold is a smooth manifold $M$ whose $C^\infty(M)$--module of vector fields, $\mathfrak{X}(M)$, is equipped with a commutative and associative bilinear product, denoted by $\circ$. This structure is required to satisfy a specific integrability constraint known as the Hertling--Manin condition, which involves a cubic compatibility relation between the Lie bracket and the commutative product $\circ$.\\

Let $M$ be a smooth manifold. The $C^\infty(M)$--module of smooth vector fields $\mathfrak{X}(M)$ is equipped with the following algebraic structures:

\begin{enumerate}
\item \textbf{Lie Bracket:} The standard Lie bracket $[\cdot, \cdot]: \mathfrak{X}(M) \times \mathfrak{X}(M) \to \mathfrak{X}(M)$, where $[X, Y]$ denotes the commutator of the vector fields $X$ and $Y$.

\item \textbf{Commutative Product}: An associative and commutative $C^\infty(M)$--bilinear product $\circ: \mathfrak{X}(M) \times \mathfrak{X}(M) \to \mathfrak{X}(M)$.
\end{enumerate}

For any triplet of vector fields $X, Y, Z \in \mathfrak{X}(M)$, we define the Leibnizator as the trilinear map $L: \mathfrak{X}(M) \times \mathfrak{X}(M) \times \mathfrak{X}(M) \to \mathfrak{X}(M)$ given by:
\begin{equation}\label{Leib}
L(X,Y,Z):=[X,Y \circ Z ]-[X,Y ]\circ Z - X \circ [Y, Z ] \, .
\end{equation}

\begin{rem}
The Leibnizator $L$ is $C^\infty(M)$--multilinear with respect to its second and third arguments. However, it fails to be $C^\infty(M)$--linear in its first argument, as the Lie bracket acts as a derivation rather than a $C^\infty(M)$--linear operator. Consequently, $L$ does not define a tensor field on $M$. Furthermore, the commutativity of the product $\circ$ implies that $L$ is symmetric in its second and third arguments, i.e., $L(X, Y, Z) = L(X, Z, Y)$ for all $X, Y, Z \in \mathfrak{X}(M)$.
\end{rem}

\begin{rem}\label{HMcon}
It is essential to emphasize that the Leibnizator $L$ satisfies the so-called Hertling--Manin condition for all vector fields $X, Y, Z, W \in \mathfrak X(M)$:
\begin{equation}\label{HMcond}
L(X \circ Y,Z,W)= L(X,Z,W) \circ Y + X \circ L( Y,Z,W) \, .
\end{equation}
This identity signifies that the Leibnizator acts as a derivation with respect to the commutative product $\circ$ in its first argument, providing a deep compatibility between the Lie and commutative algebraic structures on $\mathfrak X(M)$.
\end{rem}

\begin{definition}\label{DefFman}
 An F--manifold is a smooth manifold $M$ equipped with a $C^\infty(M)$--bilinear, associative and commutative product $\circ$ on the module of smooth vector fields $\mathfrak{X}(M)$, such that the associated Leibnizator $L$ satisfies the Hertling--Manin condition (Equation \ref{HMcon}).
\end{definition}

\begin{definition}
An F--Poisson manifold is an F--manifold whose commutative product $\circ$ and Lie bracket $[\cdot, \cdot]$ are compatible in the sense that the Leibnizator vanishes identically ($L \equiv 0$).
\end{definition}

\begin{rem}
The commutative product $\circ$ on $\mathfrak{X}(M)$ admits a geometric interpretation as a symmetric $(1,2)$--tensor field $S \in \Gamma(T^{(1,2)}M)$. This tensor is defined by the assignment $S(X, Y) := X \circ Y$ for all $X, Y \in \mathfrak{X}(M)$, where its symmetry is a direct consequence of the commutativity of the product.
\end{rem}

The following lemma establishes a geometric characterization of the Leibnizator $L$ by expressing it in terms of the symmetric $(1,2)$--tensor field $S$ and the Lie derivative operator $\mathcal{L}$.

\begin{lemma} \label{HMwithS}
Let $S \in \Gamma(T^{(1,2)}M)$ be the symmetric tensor field associated with the commutative product $\circ$, defined by $S(Y, Z) = Y \circ Z$ for all $Y, Z \in \mathfrak{X}(M)$. Then:
\begin{enumerate}
\item The Leibnizator $L(X, Y, Z)$ coincides with the Lie derivative of the tensor field $S$ along the vector field $X$. Specifically, we have: $(\mathcal{L}_{X}S)(Y,Z)=L(X,Y,Z)$.

\item The Hertling–Manin condition (Equation \ref{HMcond}) is equivalent to the following tensor identity: $\mathcal L_{S(X,Y)} S= S(X,\mathcal L_Y S) + S(Y, \mathcal L_X S)$.
\end{enumerate}
\end{lemma}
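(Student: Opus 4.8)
The plan is to treat the whole lemma as an unpacking of the Lie derivative of a tensor field, using part (1) as the dictionary that rewrites the tensorial identity of part (2) into the Hertling--Manin condition. For part (1), I would begin by noting that, since $\circ$ is $C^\infty_M$--bilinear, the assignment $S(X,Y)=X\circ Y$ is a genuine $(1,2)$--tensor field, so $\mathcal L_X S$ is again a $(1,2)$--tensor and obeys the Leibniz-type formula
\[
(\mathcal L_X S)(Y,Z)=\mathcal L_X\!\big(S(Y,Z)\big)-S(\mathcal L_X Y,Z)-S(Y,\mathcal L_X Z).
\]
Substituting $S(Y,Z)=Y\circ Z$ and $\mathcal L_X W=[X,W]$ for vector fields $W$ turns the right-hand side into $[X,Y\circ Z]-[X,Y]\circ Z-Y\circ[X,Z]$, which is exactly the Leibnizator $L(X,Y,Z)$; its symmetry in $Y$ and $Z$ recorded in the Remark serves as a consistency check.

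For part (2), the idea is to read the proposed equality as an identity of $(1,2)$--tensor fields and to evaluate it on an arbitrary pair $(Z,W)$. By part (1) applied to the vector field $S(X,Y)=X\circ Y$, the left-hand side gives $(\mathcal L_{S(X,Y)}S)(Z,W)=L(X\circ Y,Z,W)$, while the right-hand terms, understood as the $(1,2)$--tensors $(Z,W)\mapsto X\circ(\mathcal L_Y S)(Z,W)$ and $(Z,W)\mapsto Y\circ(\mathcal L_X S)(Z,W)$, become $X\circ L(Y,Z,W)$ and $Y\circ L(X,Z,W)$ after a second and third use of part (1). Hence the tensorial identity holds for all $Z,W$ if and only if
\[
L(X\circ Y,Z,W)=X\circ L(Y,Z,W)+Y\circ L(X,Z,W)
\]
for all $X,Y,Z,W\in\Gamma(TM)$, and commutativity of $\circ$ rewrites $Y\circ L(X,Z,W)=L(X,Z,W)\circ Y$, turning the last display into the Hertling--Manin condition \eqref{HMcond}. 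Since every step is an equivalence, this yields both directions at once.

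The computations are short once the conventions are fixed, so the main obstacle is interpretive rather than technical. One must first make explicit that $S(X,\mathcal L_Y S)$ and $S(Y,\mathcal L_X S)$ denote the $(1,2)$--tensor fields obtained by placing $X$ (resp. $Y$) in the first slot of $S$ and feeding the output of the Lie-derivative tensor $\mathcal L_Y S$ (resp. $\mathcal L_X S$) into the second slot, and one must confirm at the outset that $S$ is tensorial so that all the Lie derivatives in sight are themselves tensors that may be evaluated pointwise on $(Z,W)$. Once this bookkeeping is settled, part (1) is a single substitution into the tensorial Lie-derivative formula and part (2) reduces to matching three terms, with the commutativity of $\circ$ as the only algebraic ingredient.
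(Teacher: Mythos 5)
Your proof is correct and follows essentially the same route as the paper: part (1) is the standard Lie-derivative formula for a $(1,2)$-tensor with $S(Y,Z)=Y\circ Z$ and $\mathcal L_X W=[X,W]$ substituted in, exactly as in the paper's proof. For part (2) the paper merely states that it "follows directly from item (1)"; your elaboration (evaluating on $(Z,W)$, applying part (1) three times, and invoking commutativity of $\circ$) is precisely the omitted verification.
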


\begin{proof}
Recall that the Lie derivative $\mathcal L_X$ acts as a derivation of the tensor algebra and commutes with all contractions. By evaluating the Lie derivative of the $(1,2)$--tensor field $S \in \Gamma(T^{(1,2)}M)$ on the pair of vector fields $(Y, Z)$, we obtain:

\begin{align*}
        (\mathcal{L}_X S)(Y,Z)&=\mathcal{L}_X(S(Y,Z))-S(\mathcal{L}_XY,Z)-S(Y,\mathcal{L}_X Z)\\
        & =[X, Y \circ Z] - [X,Y] \circ Z - Y \circ [X,Z]\\
        &=L(X,Y,Z). 
    \end{align*} 
 The second assertion is then a direct consequence of substituting the Lie derivative $\mathcal{L}_X S$ for $L(X, \cdot, \cdot)$ into the Hertling--Manin condition.
\end{proof}

An F--manifold can be equivalently defined as a pair $(M, S)$, consisting of a smooth manifold $M$ and a symmetric tensor field $S$ of type $(1, 2)$ on $M$, which satisfies the Hertling--Manin condition.

\begin{rem}\label{tensorHT}
The Hertling--Manin condition can be understood as the vanishing of the Hertling–Manin tensor field $N: \mathfrak X(M)^{\times 4} \to \mathfrak X(M)$, defined by:
\begin{align*}
    N(X,Y,Z,W) &= L(X \circ Y, Z, W) - L(X, Z, W) \circ Y - X \circ L(Y, Z, W)\\
    &=(\mathcal{L}_{S(X,Y)}S)(Z,W)- X\circ(\mathcal{L}_{Y}S )(Z,W)-  Y\circ(\mathcal{L}_{X}S )(Z,W).
\end{align*}
\end{rem}

\subsection{F$_\text{man}$--algebras}
The concept of an F$_\text{man}$--algebra admits a dual characterization, serving both as a formal algebraic extension and a geometric byproduct. Specifically, it can be introduced as:
\begin{itemize}
\item A broadening of the class of Poisson algebras achieved by relaxing specific structural constraints.

\item The algebraic data induced on an F--manifold when the underlying manifold is a Lie group and the commutative and associative product is required to be left--invariant.
\end{itemize}

In this subsection, we provide a formal definition of F$_\text{man}$--algebras motivated by a systematic relaxation of the Poisson axioms. In Section \ref{SecFLieG}, we establish the geometric correspondence by deriving the F$_\text{man}$--algebra structure from the framework of F--Lie groups.\\

We first recall the definition of a Poisson algebra (see \cite[Definition 1.1]{LGPV}).

\begin{definition}
A Poisson algebra is a vector space $\mathscr P$ over the field $\mathbb R$, equipped with two bilinear operations: an associative and commutative product, $\circ: \mathscr P \times \mathscr P \to \mathscr P$, and a Lie bracket, $[\cdot, \cdot]: \mathscr P \times \mathscr P \to \mathscr P$, such that:
\begin{enumerate}
\item  $(\mathscr P, \circ)$ is a commutative and associative  algebra.

\item $(\mathscr P, [\; , \;] )$ is a Lie algebra.

\item The Lie bracket acts as a derivation of the associative product; specifically, the Leibniz rule holds for all $u, v, w \in \mathscr P$: $[u,v \circ  w ]=[u,v ]\circ w+ v \circ [u, w ]$.
\end{enumerate}
\end{definition}

We define the Leibnizator $L$ as the trilinear map given by:
\begin{equation*} 
L(u,v,w):=[u,v \circ w ]-[u,v ]\circ w - v \circ [u, w ] \, .
\end{equation*}

The Leibnizator serves to quantify the failure of the Lie bracket to act as a derivation of the associative product; specifically, $L$ vanishes identically if and only if the structure is a Poisson algebra.

Having introduced the Leibnizator, we are now prepared to state the definition of an F$_\text{man}$--algebra \cite[Definition 1]{CMGTG}.

\begin{definition}\label{DefFmanalg}
An F$_\text{man}$--algebra is a triple 
$(\mathscr F, \circ, [\; , \;])$ such that:
\begin{enumerate}
\item $(\mathscr F, \circ)$ is a commutative and associative algebra.

\item $(\mathscr F, [\; , \;] )$ is a Lie algebra.

\item The Leibnizator $L$ satisfies the following identity for all $u, v, w, y \in \mathscr F$:
\begin{equation}
L(u \circ v,w,y)= L(u,w,y) \circ v + u \circ L( v,w,y) \, .
\end{equation}
\end{enumerate}

This identity asserts that the Leibnizator $L$ acts as a derivation in its first argument with respect to the associative product $\circ$. This equation can also be called the Hertling--Manin condition.
\end{definition}

It follows directly from the definitions that a Poisson algebra is an F$_\text{man}$--algebra whose Leibnizator $L$ is identically zero.  In this sense, F$_\text{man}$--algebras may be regarded as almost Poisson algebras, representing a structural relaxation of the standard Poisson compatibility condition.

\section{F--Lie groups} \label{SecFLieG}

In this section, we specialize our analysis to F--manifolds $(G, \circ)$ where the underlying manifold $G$ possesses the additional structure of a simply connected Lie group and the commutative product $\circ$ is left--invariant. In this context, left--invariance stipulates that for any pair of left--invariant vector fields $X, Y \in \mathfrak{X}^+(G)$, their product $X \circ Y$ is likewise left--invariant. We formally define such a structure as an F--Lie group. The geometric and algebraic analysis in this setting is naturally concentrated on the space of left--invariant vector fields. This space constitutes a finite--dimensional real vector space which is endowed with the structure of an F$_\text{man}$--algebra. As is standard in Lie theory, there exists a canonical vector space isomorphism between $\mathfrak{X}^+(G)$ and the tangent space at the identity, $\mathfrak{g} \cong T_eG$. Consequently, the F$_\text{man}$--algebraic data on $\mathfrak{X}^+(G)$ induces a corresponding F$_\text{man}$--algebra structure on the Lie algebra $\mathfrak{g}$.\\

Given $u^+, v^+ \in \mathfrak{X}^+(G)$, let $l_g: G \to G$ denote the left translation by an element $g \in G$, defined by $l_g(h) = gh$. The left--invariance of the commutative and associative product $\circ$ is characterized by the condition that the push--forward $(l_g)_*$ satisfies $(l_g)_*(u^+ \circ v^+) = u^+ \circ v^+$. We identify each left--invariant vector field $u^+$ with its value at the identity $e \in G$, denoted by $u := u^+_e \in \mathfrak{g}$.

\begin{rem}
Recall that if $f \in C^\infty(G)$ is a smooth real--valued function on a connected Lie group $G$, and $v^+ \in \mathfrak{X}^+(G)$ is a non--trivial left--invariant vector field, then the product $f v^+$ is left--invariant if and only if $f$ is a constant function. Consequently, for any $u^+, v^+ \in \mathfrak{X}^+(G)$ and a constant function $f$, the Lie bracket satisfies:
$$
[u^+, f v^+]= (u^+f)\; v^+ + f [u^+, v^+]= f \; [u^+, v^+] 
$$
since $u^+ f$ vanishes for all constant functions. This confirms that the Lie bracket on $\mathfrak{X}^+(G)$ restricts to a well--defined $\mathbb{R}$--bilinear operation on the Lie algebra $\mathfrak{g}$.
\end{rem}

\begin{rem}
The Leibnizator $L$ (cf. Equation (\ref{Leib})), when restricted to the space of left--invariant vector fields $\mathfrak{X}^+(G)$ on an F--Lie group $G$, is explicitly given by:
$$
L(u^+,v^+,w^+)= [u^+,\, v^+ \circ w^+]
\;-\; [u^+,v^+] \circ w^+
\;-\; v^+ \circ [u^+,w^+]
$$
for any $u^+, v^+, w^+ \in \mathfrak{X}^+(G)$. Since the Lie bracket is inherently left--invariant and the commutative product $\circ$ is left--invariant by assumption, the Leibnizator $L$ constitutes a left--invariant trilinear map on $G$. Consequently, $L$ is uniquely determined by its evaluation at the identity element $e \in G$. This allows for a canonical restriction of the operation to the Lie algebra $\mathfrak{g} \cong T_e G$, yielding the algebraic Leibnizator:
$$L(u, v, w) = [u, v \circ w] - [u, v] \circ w - v \circ [u, w]$$
for all $u, v, w \in \mathfrak{g}$. In this context, the Lie bracket $[\cdot, \cdot]$ and the product $\circ$ on $\mathfrak{g}$ are the algebraic operations induced by the restriction of the corresponding invariant fields:
$$[u, v] := [u^+, v^+]|_e \quad \text{and} \quad u \circ v := (u^+ \circ v^+)|_e.$$ 
\end{rem}

It is well--established that there exists a bijective correspondence between the set of all left--invariant affine connections on a Lie group and the set of all bilinear products on its Lie algebra (see \cite[Chapter 6 \S 3]{Po} and \cite{BB}). The following definition is established within this setting.

\begin{definition}\label{LIconn}
Let $( \mathfrak X^+(G),[\,,\,], \circ)$ be the F$_\text{man}$--algebra of the F--Lie group $(G, \circ)$. We define a left--invariant affine connection $\nabla$ on $G$, which we call the canonical F--connection, by the following formula:
$$
\nabla_{u^+} v^+ = \frac{1}{2} [u^+,v^+] + u^+ \circ v^+,
$$
for all left--invariant vector fields $u^+,v^+$.
\end{definition}

We recall that the torsion tensor $\mathscr{T}$ of an affine connection $\overline{\nabla}$ is defined for vector fields $X$ and $Y$ by: $\mathscr T(X,Y)=\overline \nabla_X Y-\overline \nabla_Y X-[X,Y]$. We now apply this to the canonical F--connection $\nabla$ (Definition \ref{LIconn}) for left--invariant vector fields $u^+$ and $v^+$:
$$
\mathscr T(u^+,v^+)=u^+\circ v^+ - v^+\circ u^+ \, .
$$
Using  the commutativity of the F$_{\text{man}}$--product, $v^+ \circ u^+ = u^+ \circ v^+$, the expression simplifies to $\mathscr T(u^+,v^+)=0$. Therefore, the preceding calculation establishes the following lemma.
\begin{lemma}\label{TorFreeConn}
The canonical  F--connection $
\nabla_{u^+} v^+ = \frac{1}{2} [u^+,v^+] + u^+ \circ v^+$  is torsion--free.
\end{lemma}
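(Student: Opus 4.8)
The plan is to verify the torsion identity directly on a global frame of left--invariant vector fields and then extend to arbitrary vector fields by tensoriality. First I would fix $u,v\in\mathfrak{g}$ and compute the antisymmetrized connection $\nabla_{u^+}v^+-\nabla_{v^+}u^+$ by substituting the defining formula of the canonical F--connection from Definition \ref{LIconn}. Using the antisymmetry of the Lie bracket, $[v^+,u^+]=-[u^+,v^+]$, the two half--brackets $\tfrac12[u^+,v^+]$ and $-\tfrac12[v^+,u^+]$ combine into a single full bracket $[u^+,v^+]$, leaving the product terms $u^+\circ v^+-v^+\circ u^+$.

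Next I would substitute this into the definition of the torsion, $\mathscr T(u^+,v^+)=\nabla_{u^+}v^+-\nabla_{v^+}u^+-[u^+,v^+]$. The bracket contribution just obtained cancels exactly against the $-[u^+,v^+]$ term, so that $\mathscr T(u^+,v^+)=u^+\circ v^+-v^+\circ u^+$. The key input is then the commutativity of the F$_{\text{man}}$--product, which is axiom (1) of Definition \ref{DefFmanalg}: since $v^+\circ u^+=u^+\circ v^+$, this difference vanishes. Hence $\mathscr T(u^+,v^+)=0$ for every pair of left--invariant vector fields.

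Finally I would upgrade this vanishing on the left--invariant frame to a statement about all vector fields. Because $G$ is a Lie group, the left--invariant fields $\{u^+ : u\in\mathfrak{g}\}$ span $T_gG$ at every point $g$, and the torsion $\mathscr T$ is a genuine tensor field, i.e.\ $C^\infty$--bilinear in its two arguments. Writing arbitrary $X,Y\in\Gamma(TG)$ in this frame and invoking bilinearity then shows $\mathscr T(X,Y)=0$ everywhere, so $\mathscr T\equiv 0$ on $G$.

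There is no substantive obstacle here: the computation is purely algebraic, and the only point requiring any care is the passage from the left--invariant frame to general sections, which is handled by the tensoriality of the torsion. It is worth noting that the argument uses \emph{only} the antisymmetry of the Lie bracket and the commutativity of $\circ$; neither the Jacobi identity nor the Hertling--Manin condition (Equation \ref{HMcond}) is invoked, which explains why torsion--freeness persists for any commutative product, not merely for a full F$_{\text{man}}$--algebra structure.
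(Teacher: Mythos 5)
Your proposal is correct and follows essentially the same argument as the paper: substitute the connection into the torsion formula, let the half--brackets combine and cancel the $-[u^+,v^+]$ term, and conclude from the commutativity of $\circ$. The only addition is your explicit extension from the left--invariant frame to arbitrary vector fields via tensoriality of $\mathscr T$, which the paper leaves implicit (a left--invariant tensor vanishing on left--invariant fields vanishes identically), and your closing observation that neither the Jacobi identity nor the Hertling--Manin condition is needed is accurate.
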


\subsection{Curvature of the Affine Connection}
We begin by recalling the definition of the curvature tensor $\overline R$ for an affine connection $\overline \nabla$, acting on vector fields $X, Y, Z$: 
$$
\overline R(X,Y)Z := \left[ \overline \nabla_X, \overline \nabla_Y \right] Z - \overline \nabla_{[X,Y]} Z \, .$$

We compute the curvature $R(u^+, v^+) w^+$ for the canonical F--connection $\nabla$ (Definition \ref{LIconn}) acting on left--invariant vector fields $u^+, v^+, w^+$ on the F--Lie group $G$. Applying the definition $\nabla_{u^+} v^+ = \tfrac{1}{2} [u^+,v^+] + u^+ \circ v^+$ repeatedly:
\begin{align*}
\nabla_{u^+} \nabla_{v^+} w^+ &= \frac12\left( \frac12 [u^+,[v^+, w^+]] + u^+ \circ [v^+, w^+] \right)
  + \frac12 [u^+, v^+ \circ w^+] + u^+ \circ (v^+ \circ w^+) \, , \\
\nabla_{v^+} \nabla_{u^+} w^+ &= \frac12\left( \frac12 [v^+,[u^+ , w^+ ]] + v^+ \circ [u^+, w^+] \right)
  + \frac12 [v^+ , u^+ \circ w^+] + v^+ \circ (u^+ \circ w^+) \, ,\\
\nabla_{[u^+, v^+]} w^+  &= \frac12 [[u^+,v^+ ], w^+] + [u^+, v^+] \circ w^+ \, .
\end{align*}
Substituting these into the curvature definition, $R(u^+, v^+) w^+ = [\nabla_{u^+}, \nabla_{v^+}] w^+  - \nabla_{[u^+, v^+]} w^+$, and using the F$_{\text{man}}$--algebra properties  (specifically, the associativity and commutativity of $\circ$, which ensures $u^+ \circ (v^+ \circ w^+) - v^+ \circ (u^+ \circ w^+) = 0$), we obtain:
\begin{align}\label{Rie}
    R(u^+,v^+)& w^+ =  \tfrac{1}{4}\left( [u^+,[v^+, w^+]] - [v^+,[u^+, w^+]] \right) - \tfrac{1}{2} [[u^+,v^+], w^+] \notag \\
              +\tfrac12 &[u^+, v^+ \circ w^+] - \tfrac12 [v^+, u^+ \circ w^+] - ([u^+,v^+] \circ w^+) +\tfrac12 (u^+ \circ [v^+, w^+] - v^+ \circ [u^+, w^+]) \, .
\end{align}

We observe two structural simplifications: 
\begin{enumerate}[(i)]
\item The second line can be expressed using the Leibnizator $L$ as $\tfrac12 \left(L(u^+,v^+,w^+) - L(v^+,u^+,w^+)\right)$.

\item The first line, which involves nested Lie brackets, simplifies using the Jacobi identity to $-\tfrac{1}{4} [[u^+, v^+], w^+]$.
\end{enumerate}

The preceding computations regarding the canonical F--connection are summarized in the following theorem, which expresses the curvature tensor in terms of the Lie algebra structure and the Leibnizator.

\begin{theorem}\label{curvaturethm}
Let $(\mathfrak X^+(M), [\,,\,], \circ)$ be the F$_{\text{man}}$--algebra of the F--Lie group $(G, \circ)$, endowed with the left--invariant connection $\nabla$ defined in Definition \ref{LIconn}. Then, the curvature tensor $R$ of $\nabla$, acting on the left--invariant vector fields $u^+, v^+, w^+$, is given by:
$$R(u^+,v^+)w^+ =- \tfrac14 [[u^+,v^+],w^+]+\tfrac12\left(L(u^+,v^+, w^+) - L(v^+,u^+,w^+)\right) \, . $$
\end{theorem}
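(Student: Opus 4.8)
The plan is to compute $R(u^+,v^+)w^+$ directly from the definition $R(u^+,v^+)w^+=\nabla_{u^+}\nabla_{v^+}w^+-\nabla_{v^+}\nabla_{u^+}w^+-\nabla_{[u^+,v^+]}w^+$, substitute the formula $\nabla_{u^+}v^+=\tfrac12[u^+,v^+]+u^+\circ v^+$ from Definition \ref{LIconn}, and then sort the resulting expression into three homogeneous blocks: terms built only from iterated Lie brackets, terms built only from iterated products $\circ$, and mixed terms containing exactly one bracket and one product. First I would expand each of the three second-order expressions by applying the connection formula twice, remembering that on left--invariant vector fields both $[\,,\,]$ and $\circ$ may be computed fiberwise in $\mathfrak{g}=T_eG$, since $\circ$ is defined by left translation of the product on $\mathfrak{g}$ and is therefore associative and commutative on $\Gamma(TG)$.

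The purely $\circ$--valued contribution collapses to $u^+\circ(v^+\circ w^+)-v^+\circ(u^+\circ w^+)$, which vanishes by associativity and commutativity of the F$_{\text{man}}$--product. The purely bracket--valued contribution is $\tfrac14\big([u^+,[v^+,w^+]]-[v^+,[u^+,w^+]]\big)-\tfrac12[[u^+,v^+],w^+]$; here I would invoke the Jacobi identity in the form $[u^+,[v^+,w^+]]-[v^+,[u^+,w^+]]=[[u^+,v^+],w^+]$, so that this block reduces to $\tfrac14[[u^+,v^+],w^+]-\tfrac12[[u^+,v^+],w^+]=-\tfrac14[[u^+,v^+],w^+]$. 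Finally, the mixed terms are
$$
\tfrac12[u^+,v^+\circ w^+]-\tfrac12[v^+,u^+\circ w^+]-[u^+,v^+]\circ w^+
+\tfrac12\big(u^+\circ[v^+,w^+]-v^+\circ[u^+,w^+]\big),
$$
and I would identify these, using the definition of the Leibnizator \eqref{Leib} together with the antisymmetry $[v^+,u^+]=-[u^+,v^+]$ (which accounts for the single $-[u^+,v^+]\circ w^+$ rather than a half), as precisely $\tfrac12\big(L(u^+,v^+,w^+)-L(v^+,u^+,w^+)\big)$. Assembling the three blocks yields the asserted formula.

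The computation is routine, so the genuine difficulty is bookkeeping rather than conceptual: the coefficients $\tfrac14$ and $\tfrac12$ must be tracked through the double expansion without error, and the mixed block must be verified to reassemble \emph{exactly} into the antisymmetrized Leibnizator with no surplus terms. The step most prone to sign or factor mistakes is the reconciliation of the lone $-[u^+,v^+]\circ w^+$ term with the two half--bracket terms $-\tfrac12[u^+,v^+]\circ w^+$ coming from $\tfrac12 L(u^+,v^+,w^+)$ and $+\tfrac12[v^+,u^+]\circ w^+$ coming from $-\tfrac12 L(v^+,u^+,w^+)$; one should check that these combine correctly via $[v^+,u^+]=-[u^+,v^+]$. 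I would therefore carry out the mixed--term matching last and with particular care, and also record explicitly that the associativity and commutativity of $\circ$ on $\Gamma(TG)$ are inherited from the F$_{\text{man}}$--algebra on $\mathfrak{g}$, since that is what licenses the cancellation of the pure--product block.
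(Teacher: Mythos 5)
Your proposal is correct and follows essentially the same route as the paper: expand the curvature via the connection formula, cancel the pure $\circ$--block by associativity and commutativity, reduce the pure bracket block to $-\tfrac14[[u^+,v^+],w^+]$ via the Jacobi identity, and identify the mixed terms with the antisymmetrized Leibnizator. Your explicit reconciliation of the lone $-[u^+,v^+]\circ w^+$ term with the two half--terms via $[v^+,u^+]=-[u^+,v^+]$ checks out and is exactly the bookkeeping implicit in the paper's computation.
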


\begin{rem}

Theorem \ref{curvaturethm} demonstrates that the curvature tensor $R$ of the canonical F--connection $\nabla$ splits into two geometrically distinct contributions, which are expressed solely in terms of the Lie bracket and the Leibnizator.

\begin{enumerate}[(i)]
\item The first term,
$
-\tfrac14 [[u^+,v^+],w^+]
$, 
depends only on the Lie bracket $[\,,\,]$. This term is precisely the curvature of the canonical torsion--free Cartan connection on the Lie group $G$. Hence, it represents the intrinsic curvature associated solely with the underlying Lie group structure of $G$.

\item The remaining term,
$
\tfrac12\big(L(u^+,v^+,w)-L(v^+,u^+,w^+)\big)
$,  arises entirely from the Leibnizator $L$. This part measures the deviation from the condition that the Lie bracket acts as a derivation of the F$_{\text{man}}$--product $\circ$. Geometrically, this contribution quantifies how the symmetric tensor $S(u^+, v^+) = u^+ \circ v^+$ (which defines the F$_{\text{man}}$--algebra and is external to the standard Lie group structure) bends or distorts the connection $\nabla$.
\end{enumerate}
\end{rem}

\begin{rem}
The analysis of the curvature formula provided in Theorem \ref{curvaturethm} leads to two important special cases by examining the contributions from the Lie bracket and the Leibnizator.
\begin{enumerate}[(i)]
\item If the Leibnizator vanishes identically, $L \equiv 0$, then the F$_{\text{man}}$--algebra structure reduces to a Poisson algebra structure. In this case, the curvature simplifies to:
$$R(u^+,v^+) w^+ = -\frac14 [[u^+,v^+],w^+] \, .$$
This expression is precisely the bi-invariant curvature determined solely by the Lie bracket on $\mathfrak X^+(G)$. This result aligns with the analysis of special connections studied in the context of Poisson algebras in \cite{BB}.

\item Consider the case where $\mathfrak X^+(G)$ is a 2--step nilpotent Lie algebra. By definition, this means that the double Lie bracket vanishes: $[[u^+, v^+], w^+] = 0$ for all $u^+, v^+, w^+ \in \mathfrak X^+(G)$. Geometrically, this condition implies that the contribution to the curvature originating from the Lie bracket structure vanishes, effectively removing the intrinsic Lie group curvature term. In this specific case, the curvature tensor of $\nabla$ reduces entirely to the second term:
$$
R(u^+,v^+) w^+ = \tfrac12\left(L(u^+,v^+,w^+)-L(v^+,u^+,w^+)\right) \, .
$$
Thus, for F--Lie groups whose Lie algebra is 2--step nilpotent, the geometry of the canonical connection $\nabla$ is governed solely by the interaction between the commutative product $\circ$ and the underlying nilpotent structure, as captured by the Leibnizator $L$. The specialized case of 2--step nilpotent Lie groups where $L \equiv 0$ (i.e., the Poisson algebra case) is treated in detail in \cite{BB}. 
\end{enumerate}
\end{rem}

\subsection{Holonomy Algebra}
We begin by recalling the Ambrose-Singer Theorem \cite{AS}, which provides a method for determining the Lie algebra of the holonomy group of a linear connection. Specifically, it characterizes the Lie algebra $\mathfrak{hol}_p(\overline{\nabla})$ of the restricted holonomy group $\mathrm{Hol}^0_p(\overline \nabla)$ (based at $p \in M$) in terms of the curvature endomorphisms $R(X, Y)$ and all of their successive covariant derivatives $\left((\nabla_{Z_1} \cdots \nabla_{Z_k} R)(X, Y)\right)$ evaluated at $p$, for all vector fields $X, Y, Z_1, \dots, Z_k$.

The purpose of this section is to establish key properties concerning the holonomy of the canonical connection $\nabla_{u^+} v^+ = \tfrac{1}{2} [u^+, v^+] + u^+ \circ v^+$ when the pair of products $([\,,\,], \circ)$ defines either an F$_{\text{man}}$--algebra structure or a Poisson algebra structure. We denote the canonical connection associated with a Poisson algebra structure as $\nabla^{\text{Pois}}$. The curvature tensor associated with $\nabla^{\text{Pois}}$ is denoted by $R^{\text{Pois}}$.\\

On the F--Lie group $G$, we consider the left--invariant connection defined by the assignment: 
$$\nabla_{u^+} v^+ = \tfrac{1}{2}[u^+, v^+] + u^+ \circ v^+\, .$$ 
For each element $u \in \mathfrak{g}$, we define an associated endomorphism $A_u \in \operatorname{End}(\mathfrak{g})$ given by:
$$A_u := \tfrac{1}{2}\operatorname{ad}_u + S_u$$
where $\operatorname{ad}_u(v) = [u, v]$ denotes the adjoint representation and $S_u(v) = u \circ v$ is the linear operator induced by the F$_\text{man}$--product. Under the canonical isomorphism between the space of left--invariant vector fields $\mathfrak{X}^+(G)$ and the Lie algebra $\mathfrak{g}$, the covariant derivative $\nabla_{u^+}$ corresponds precisely to the algebraic endomorphism $A_u$. Consequently, the geometric properties of the connection $\nabla$ may be investigated through the algebraic properties of the operators $A_u$ acting on $\mathfrak{g}$.\\

We list several elementary and structural facts in the following remark, which will be instrumental in the subsequent description of the holonomy Lie algebra of the left--invariant connection $\nabla$.

\begin{rem}\label{EndConnCurv}
The curvature endomorphism $R(u^+, v^+)$ evaluated at the identity $e \in G$, denoted $R(u, v) \in \text{End}(\mathfrak{g})$, can be decomposed into two distinct endomorphisms:
$$
R(u,v) \;=\; -\tfrac14\operatorname{ad}_{[u,v]} +\tfrac{1}{2}\big(L(u,v)-L(v,u)\big) \, .
$$
Here, $L(u, v) \in \text{End}(\mathfrak{g})$ denotes the Leibnizator endomorphism (acting on the third argument): $L(u,v)=[\operatorname{ad}_u, S_v]-S_{[u,v]}$.

With the previous definitions and assumptions, the following results are achieved:
\begin{enumerate}[(i)]
\item  For any $v \in \mathfrak{g}$, considering the endomorphism $\operatorname{ad}_v \in \mathrm{End}(\mathfrak{g})$, its covariant derivative with respect to $\nabla$ along $u \in \mathfrak{g}$ is given by: $\nabla_u (\operatorname{ad}_v) = [A_u, \operatorname{ad}_v]$.

\item Using $A_v = \tfrac{1}{2}\operatorname{ad}_v + S_v$, the commutator $[\operatorname{ad}_u, A_v]$ is calculated as:
\begin{align*}
[\operatorname{ad}_u,A_v]
=& \tfrac12[\operatorname{ad}_u,\operatorname{ad}_v] \;+\; [\operatorname{ad}_u,S_v]=\tfrac12\operatorname{ad}_{[u,v]}+ [\operatorname{ad}_u,S_v] =\tfrac12\operatorname{ad}_{[u,v]}+S_{[u,v]}+L(u,v) \notag \\ 
=&A_{[u,v]}+L(u,v)\, .
\end{align*}

\item  By the definition of the curvature tensor $R(u^+, v^+) = \left[ \nabla_{u^+}, \nabla_{v^+} \right] - \nabla_{[u^+, v^+]}$, the commutator of the endomorphisms $A_u$ and $A_v$ (associated to the connection $\nabla$) is:
$$[A_u,A_v]=A_{[u,v]}-\tfrac{1}{4}\operatorname{ad}_{[u,v]}+\tfrac{1}{2}\left(L(u,v)-L(v,u)\right) \, .$$
\end{enumerate}
\end{rem}

The Ambrose-Singer Theorem applied to a Lie group with a left--invariant connection simplifies considerably, providing an effective characterization of the holonomy algebra in terms of the Lie algebra structure (see \cite[Theorem 2.3]{AGT} and \cite[Definition 2.3]{Al}).

\begin{theorem}\label{AS}
Let $\nabla$ be a left--invariant linear connection on the Lie group $G$, and let $\mathfrak{g}\cong \mathfrak X^+(G)$ be the Lie algebra of $G$. Then, the holonomy algebra $\mathfrak{hol}(\nabla)$, based at the identity element $e \in G$, is the smallest Lie subalgebra of $\operatorname{End}(\mathfrak{g})$ that satisfies two conditions: 
\begin{enumerate}
\item It contains all the curvature endomorphisms $R(v, w) \in \operatorname{End}(\mathfrak{g})$ for any $v, w \in \mathfrak{g}$.

\item It is closed under commutation with the endomorphism $A_u \in \operatorname{End}(\mathfrak{g})$, for any $u \in \mathfrak{g}$.
\end{enumerate}
\end{theorem}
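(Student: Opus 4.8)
The plan is to derive the statement from the covariant-derivative form of the Ambrose-Singer theorem recalled above, translating every object into the Lie algebra $\mathfrak g$ by exploiting left-invariance. Since $\nabla$ is left-invariant, both $R$ and all of its iterated covariant derivatives are left-invariant tensor fields, hence each is determined by its value at $e$; in particular the curvature endomorphisms at $e$ are exactly $\{R(v,w):v,w\in\mathfrak g\}$, and the relevant endomorphism is $A_u v:=(\nabla_{u^+}v^+)_e$. The first step is a computational lemma: for any left-invariant endomorphism field $\tilde B$ with value $B=\tilde B_e$ and any $u\in\mathfrak g$,
$$(\nabla_{u^+}\tilde B)_e=[A_u,B],$$
which follows at once from the Leibniz rule for $\nabla$ applied to $\tilde B(v^+)$ together with $(\nabla_{u^+}v^+)_e=A_uv$. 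Feeding the curvature $2$-form into the product rule and comparing with this lemma yields the key relation
$$(\nabla_{u^+}R)(v,w)_e=[A_u,R(v,w)]-R(A_uv,w)-R(v,A_uw),$$
and, verbatim, the same identity with $R$ replaced by any iterated covariant derivative $\Xi=\nabla_{u_1^+}\cdots\nabla_{u_k^+}R$.

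Next I will reduce the Ambrose-Singer generators to left-invariant differentiation directions. Although $\nabla_{Z_1}\cdots\nabla_{Z_k}R$ is tensorial only in its last two slots, expanding each $Z_i$ in a left-invariant frame and using the derivation property of $\nabla$ shows that the value at $e$ of any Ambrose-Singer generator is a linear combination of iterated covariant derivatives taken along left-invariant fields. Consequently $\mathfrak{hol}(\nabla)=\mathfrak{hol}_e$ is spanned by the endomorphisms $\Xi(v,w)_e$ with $\Xi=\nabla_{u_1^+}\cdots\nabla_{u_k^+}R$ and $u_i,v,w\in\mathfrak g$.

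Writing $\mathfrak h$ for the smallest Lie subalgebra of $\operatorname{End}(\mathfrak g)$ that contains all $R(v,w)$ and is closed under $[A_u,\cdot]$, the proof then splits into two inclusions. For $\mathfrak{hol}_e\subseteq\mathfrak h$ I argue by induction on the order $k$: the case $k=0$ is condition (1), and the key relation rewrites a generic order-$(k+1)$ generator as
$$(\nabla_{u^+}\Xi)(v,w)_e=[A_u,\Xi(v,w)_e]-\Xi(A_uv,w)_e-\Xi(v,A_uw)_e,$$
a sum of the $[A_u,\cdot]$-image of an order-$k$ generator and two further order-$k$ generators, all lying in $\mathfrak h$ by the inductive hypothesis and the $[A_u,\cdot]$-invariance of $\mathfrak h$. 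For the reverse inclusion I verify that $\mathfrak{hol}_e$ itself satisfies (1) and (2): condition (1) is immediate, while (2) follows because the same relation exhibits $[A_u,\Xi(v,w)_e]$ as the sum of the order-$(k+1)$ generator $(\nabla_{u^+}\Xi)(v,w)_e$ and the order-$k$ generators $\Xi(A_uv,w)_e$ and $\Xi(v,A_uw)_e$, all of which are Ambrose-Singer generators; an induction on $k$ then gives $[A_u,\mathfrak{hol}_e]\subseteq\mathfrak{hol}_e$. Since $\mathfrak{hol}_e$ is a Lie subalgebra satisfying (1) and (2), minimality of $\mathfrak h$ yields $\mathfrak h\subseteq\mathfrak{hol}_e$, and the two inclusions give $\mathfrak{hol}(\nabla)=\mathfrak h$.

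I expect the main obstacle to be the bookkeeping in the reduction to the left-invariant frame, namely checking that restricting the directions $Z_i$ to left-invariant fields neither enlarges nor shrinks the Ambrose-Singer span, together with keeping track of the curvature-type correction terms $R(A_uv,w)$ and $R(v,A_uw)$ produced by each application of the product rule. It is worth isolating that the left-invariant computation translates precisely condition (2), the $[A_u,\cdot]$-invariance, whereas the Lie-subalgebra requirement in the statement is supplied for free by the fact that $\mathfrak{hol}_e$ is itself a Lie algebra.
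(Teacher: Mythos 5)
Your proof is correct, but note that the paper itself does not prove this statement: it is quoted from the literature (Theorem 2.3 of the reference by Andrada--Garrone--Tolcachier, together with Alekseevski's work), so you have supplied an argument where the paper defers to citations. Your route is the natural one and matches the paper's framing exactly: starting from the covariant-derivative form of Ambrose--Singer recalled just before the theorem, your key identity
$$(\nabla_{u^+}R)(v,w)\big|_e=[A_u,R(v,w)]-R(A_uv,w)-R(v,A_uw)$$
is precisely the content of the paper's remark immediately following the theorem (``the covariant derivative of the curvature and the commutator $[A_u,R(v,w)]$ are equivalent, differing only by curvature terms''), and your double induction on the order of differentiation is the standard way to convert that identity into the two-sided inclusion. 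Two small points deserve flagging. First, the version of Ambrose--Singer you (and the paper) take as the starting point --- holonomy spanned by iterated covariant derivatives of curvature at the base point --- is really Nijenhuis' infinitesimal-holonomy characterization, which agrees with the restricted holonomy algebra only for real-analytic connections; this is harmless here because a left-invariant connection on a Lie group is real-analytic, but a careful write-up should say so, since otherwise the second inclusion $\mathfrak{h}\subseteq\mathfrak{hol}_e$ only lands in the a priori smaller infinitesimal holonomy algebra. Second, your frame-expansion argument reducing arbitrary directions $Z_i$ to left-invariant ones is correct (the Leibniz terms $(Z_1f_i)\nabla_{u_i^+}R$ produced at each stage are lower-order generators, so the span at $e$ is unchanged), and this bookkeeping, which you rightly identified as the main obstacle, does go through by induction on the order.
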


We emphasize that, for a left--invariant connection, the covariant derivative of the curvature $(\nabla_u R)(v, w)$ and the commutator $[A_u, R(v, w)]$ are equivalent, differing only by curvature terms.\\

In the event that the Leibnizator vanishes identically, $L \equiv 0$, the structure $(\mathfrak{g}, [\,,\,], \circ)$ is a Poisson algebra. The curvature tensor of the resulting connection, denoted $\nabla^{\text{Pois}}$, simplifies to: 
$$R^{\text{Pois}}(u,v)=-\tfrac14\operatorname{ad}_{[u,v]} \, .$$

For this specific connection $\nabla^{\text{Pois}}$, the holonomy algebra $\mathfrak{hol}(\nabla^{\text{Pois}})$ is determined by the adjoint and the $A$ endomorphism of the Lie algebra. Specifically, it is given by: 
$$\mathfrak{hol}(\nabla^{\text{Pois}}) = \operatorname{ad}_{[\mathfrak{g}, \mathfrak{g}]} + A_{[\mathfrak{g}, [\mathfrak{g}, \mathfrak{g}]]} \, ,$$ 

see \cite[Lemma 2.2]{BB}. In the general case of an F--Lie group, the relationship between the holonomy of the canonical F--connection $\nabla$ and the Poisson case holonomy $\nabla^{\text{Pois}}$ is described by the following proposition.

\begin{prop}\label{thm:holonomy}
Let $G$ be an F--Lie group endowed with the canonical connection $\nabla$ (Definition \ref{LIconn}). Then, the holonomy algebra of the associated Poisson connection $\nabla^{\text{Pois}}$ satisfies the following inclusion:
$$
\mathfrak {hol}(\nabla^{\text{Pois}})\subseteq \mathfrak{hol}(\nabla) \; .
$$
\end{prop}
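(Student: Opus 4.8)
The plan is to carry out the whole argument at the level of the Lie algebra $\mathfrak{g}$, via the Ambrose--Singer description of Theorem \ref{AS}, and to reduce the asserted inclusion to a statement about finitely many generators. The structural point I would emphasize first is that $\mathfrak{hol}(\nabla)$ and $\mathfrak{hol}(\nabla^{\text{Pois}})$ are governed by the \emph{same} family of endomorphisms $A_u=\tfrac12\operatorname{ad}_u+S_u$: by Theorem \ref{AS} each is the smallest Lie subalgebra of $\operatorname{End}(\mathfrak{g})$ that contains its own curvature endomorphisms and is stable under every bracket $[A_u,\,\cdot\,]$. Only the curvature generators differ, and by Theorem \ref{curvaturethm} they are related by $R(u,v)=R^{\text{Pois}}(u,v)+\tfrac12\big(L(u,v)-L(v,u)\big)$ with $R^{\text{Pois}}(u,v)=-\tfrac14\operatorname{ad}_{[u,v]}$. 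Because the closure operations (stability under the various $[A_u,\,\cdot\,]$ and under the bracket of $\operatorname{End}(\mathfrak{g})$) are literally the same for both algebras, it suffices to show that the \emph{generators} of $\mathfrak{hol}(\nabla^{\text{Pois}})$ already lie in $\mathfrak{hol}(\nabla)$; all further iterates are then produced by operations that cannot leave $\mathfrak{hol}(\nabla)$.

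Concretely, I would invoke the explicit description $\mathfrak{hol}(\nabla^{\text{Pois}})=\operatorname{ad}_{[\mathfrak{g},\mathfrak{g}]}+A_{[\mathfrak{g},[\mathfrak{g},\mathfrak{g}]]}$ of \cite[Lemma 2.2]{BB} and establish the two memberships $\operatorname{ad}_{[\mathfrak{g},\mathfrak{g}]}\subseteq\mathfrak{hol}(\nabla)$ and $A_{[\mathfrak{g},[\mathfrak{g},\mathfrak{g}]]}\subseteq\mathfrak{hol}(\nabla)$ separately. The engine for both is identity (iii) of Remark \ref{EndConnCurv}, rewritten as $[A_u,A_v]=A_{[u,v]}+R(u,v)$, which exhibits each curvature as the defect of $u\mapsto A_u$ from being a homomorphism. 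Applying it twice gives $A_{[u,[v,w]]}=[A_u,[A_v,A_w]]-[A_u,R(v,w)]-R(u,[v,w])$; here $R(u,[v,w])$ is a curvature endomorphism and $[A_u,R(v,w)]$ is the $[A_u,\,\cdot\,]$--image of one, so both already lie in $\mathfrak{hol}(\nabla)$, and the membership of $A_{[\mathfrak{g},[\mathfrak{g},\mathfrak{g}]]}$ is reduced to absorbing the triple commutator $[A_u,[A_v,A_w]]$ into $\mathfrak{hol}(\nabla)$. For the adjoint generators I would subtract the antisymmetrized Leibnizator from the curvature, $-\tfrac14\operatorname{ad}_{[u,v]}=R(u,v)-\tfrac12\big(L(u,v)-L(v,u)\big)$, so that $\operatorname{ad}_{[\mathfrak{g},\mathfrak{g}]}\subseteq\mathfrak{hol}(\nabla)$ would follow once the antisymmetrized Leibnizator endomorphisms are placed inside $\mathfrak{hol}(\nabla)$.

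The step I expect to be the main obstacle is exactly this disentanglement of the bracket contribution from the Leibnizator contribution inside $R(u,v)$. Naive closure under $[A_u,\,\cdot\,]$ does not separate the two: when one expands $[A_w,R(u,v)]$ the Jacobi identity forces the $A$--terms to cancel, so that neither $\operatorname{ad}_{[u,v]}$ nor $A_{[u,[v,w]]}$ ever appears isolated from curvature corrections, and in the $F_{\mathrm{man}}$ case the bracket $[A_w,\operatorname{ad}_{[u,v]}]=A_{[w,[u,v]]}-L([u,v],w)$ moreover drags in fresh Leibnizator endomorphisms. To break this symmetry I would exploit the two features of the $F_{\mathrm{man}}$--structure not yet used: the commutativity $[S_u,S_v]=0$ forced by associativity and commutativity of $\circ$, and the Leibnizator identity $L(u,v)=[\operatorname{ad}_u,A_v]-A_{[u,v]}$ of Remark \ref{EndConnCurv}(ii). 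Substituting these into the triple commutator $[A_u,[A_v,A_w]]$ and into the antisymmetrized Leibnizator is the part that requires genuine computation rather than formal bracket juggling, and it is where I expect the real content of the proposition to lie.
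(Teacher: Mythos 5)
Your reduction is set up correctly and every intermediate identity you use checks out: $[A_u,A_v]=A_{[u,v]}+R(u,v)$, hence $A_{[u,[v,w]]}=[A_u,[A_v,A_w]]-[A_u,R(v,w)]-R(u,[v,w])$, and $-\tfrac14\operatorname{ad}_{[u,v]}=R(u,v)-\tfrac12\bigl(L(u,v)-L(v,u)\bigr)$. But the proposal is not a proof: it terminates exactly at the two memberships that carry all the content, namely placing the antisymmetrized Leibnizator endomorphisms (equivalently, $\operatorname{ad}_{[\mathfrak g,\mathfrak g]}$) and the triple commutators $[A_u,[A_v,A_w]]$ inside $\mathfrak{hol}(\nabla)$, and you offer no argument for either, only the (correct) diagnosis that closure under $[A_u,\,\cdot\,]$ can never isolate them. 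Note in particular that there is no a priori reason for $[A_u,[A_v,A_w]]$ to lie in $\mathfrak{hol}(\nabla)$: by Theorem \ref{AS} the holonomy algebra is closed under bracketing \emph{with} the $A_u$'s but need not contain the $A_u$'s themselves, so ``absorbing the triple commutator'' is precisely the assertion to be proved, not a routine final step.

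You should know, however, that the point where you stall is the same point the paper silently skips. The paper's entire proof is the assertion that $\operatorname{ad}_{[\mathfrak g,\mathfrak g]}+A_{[\mathfrak g,[\mathfrak g,\mathfrak g]]}+L(\mathfrak g,\mathfrak g)\subset\mathfrak{hol}(\nabla)$ is a ``direct consequence'' of Remark \ref{EndConnCurv} and Theorem \ref{AS}, with no computation given. Your observation that the generators $R(u,v)=-\tfrac14\operatorname{ad}_{[u,v]}+\tfrac12\bigl(L(u,v)-L(v,u)\bigr)$ only ever produce these summands in combination shows this is not a formal consequence; in fact the paper's displayed inclusion is false as stated. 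In the paper's own example $(\mathfrak n_{3,1}^{(1)},A_4)$ the Leibnizator $L=-\theta^3\otimes\theta^3\otimes\theta^3\otimes e_1$ is totally symmetric and the algebra is 2--step nilpotent, so $R\equiv 0$ and $\mathfrak{hol}(\nabla)=\{0\}$, while $L(e_3,e_3)=-\theta^3\otimes e_1\neq 0$; the Proposition itself survives there only because $\mathfrak{hol}(\nabla^{\text{Pois}})=\operatorname{ad}_{[\mathfrak g,\mathfrak g]}+A_{[\mathfrak g,[\mathfrak g,\mathfrak g]]}$ (from \cite[Lemma 2.2]{BB}) also vanishes for the Heisenberg algebra. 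So your proposal and the paper's proof share the same gap: neither supplies an argument separating the $\operatorname{ad}$--part of the curvature from its Leibnizator part, and until that separation is carried out (or extra hypotheses are imposed, or a counterexample is found), the inclusion $\mathfrak{hol}(\nabla^{\text{Pois}})\subset\mathfrak{hol}(\nabla)$ remains unestablished in general. Your diagnosis of where the real difficulty lies is accurate and, frankly, sharper than the treatment in the paper.
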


\begin{proof}
The inclusion relation for the general F--Lie group is a direct consequence of the structural identities established in Remark \ref{EndConnCurv} and the Ambrose--Singer theorem (Theorem \ref{AS}):
$$
 \operatorname{ad}_{[\mathfrak g, \mathfrak g]}+A_{[\mathfrak g,[\mathfrak g,\mathfrak g ]]}+ L(\mathfrak g, \mathfrak g)
\subseteq \mathfrak{hol}(\nabla) \, .
$$
\end{proof}


\subsection{Poisson--Algebra Distribution}
A fundamental concept for our subsequent discussion is the notion of a distribution on a manifold. We now recall its formal definition and key properties (see \cite[Section 3.7]{BL}, \cite{La}).

\begin{definition}
Let $M$ be a smooth manifold and $TM \to M$ its tangent bundle. A distribution $\mathcal{D}$ in $TM$ is a mapping that assigns to each point $p \in M$ a vector subspace $\mathcal{D}_p \subseteq T_p M$. Furthermore:
\begin{enumerate}[(i)]
\item The rank of the distribution $\mathcal D$ at $p \in M$ is defined as the dimension of $\mathcal D_p$. 

\item The distribution $\mathcal D$ is said to be regular at $p$ if there exist an open set $U \subseteq M$ containing $p$ such that the rank of $\mathcal D$ is constant for all $q \in U$. 

\item The distribution $\mathcal D$ in $TM$ is smooth if for any $p\in M$
and $v \in \mathcal D_p$, there exist a smooth local vector field $X$ defined on an open set $U$ of $p$ such that $X_q  \in \mathcal D_q$ for all $q \in U$ and $X_p = v$.

\item A local
integral manifold through $p$ for $\mathcal D$ is a connected immersed submanifold $N \subseteq M$, $p \in N$,
such that $T_q N = \mathcal D_q$ for every $q \in N$.
\end{enumerate}
\end{definition}

\begin{rem}\label{regular-smooth} Let $(G, S)$ be an F--Lie group.
Consider the map $\mathcal L_{\bullet} S: \mathfrak X^+(G) \to \Gamma(T^{(1,2)}(TG))$, $u^+ \mapsto \mathcal L_{u^+} S$.  The kernel of this map, $\ker (\mathcal L_{\bullet} S)$, naturally defines a distribution on $G$. Defining a distribution via the kernel of a vector bundle map is a common technique in symplectic and Poisson geometry. For instance, in the context of presymplectic manifolds, a null distribution is defined as the kernel of the presymplectic 2-form (see \cite[Section 2.2.1]{Bu}).
\end{rem}

We now introduce the Poisson--algebra distribution on an F--Lie group and provide its geometric characterization in terms of the vanishing of the Lie derivative of the tensor field $S$.

\begin{definition}\label{nulidad} 
Let $(G, S)$ be an F--Lie group. We define the Poisson--algebra distribution, denoted by $\mathcal{D}_{\text{Pois}}$, as the kernel of the map $\mathcal{L}_{\bullet} S: \mathfrak{X}^+(G) \to \Gamma(T^{(1,2)}(T G))$, $u^+ \mapsto \mathcal L_{u^+} S$. Formally, this subbundle is defined by:
$$
\mathcal D_{\text{Pois}} := \{ u^+\in\mathfrak X^+(G) ;\; \mathcal L_{u^+} S= 0 \}.
$$
Equivalently, $\mathcal{D}_{\text{Pois}}$ can be characterized as the set of left--invariant vector fields that reside in the center of the Leibnizator $L$:
\begin{equation*}
\mathcal D_{\text{Pois}} := \{ u^+\in \mathfrak X^+(G) ;\;
L(u^+,v^+,w^+)= 0, \; \forall v^+, w^+ \in \mathfrak X^+(G)  \}.
\end{equation*}
\end{definition}

\begin{prop}\label{subalgebra}
Let $(G, \circ)$ be an F--Lie group and let $\mathcal{D}_{\text{Pois}}$ denote its associated Poisson--algebra distribution. Then, $\mathcal{D}_{\text{Pois}}$ is closed under the commutative product $\circ$; specifically, for any pair of left--invariant vector fields $u^+, v^+ \in \mathcal{D}_{\text{Pois}}$, their product $u^+ \circ v^+$ also resides in $\mathcal{D}_{\text{Pois}}$.
\end{prop}

\begin{proof}
The result is a direct consequence of the Hertling--Manin condition, which stipulates that for any F--manifold, the identity
$$L(u^+ \circ v^+, w^+, y^+) = L(u^+, w^+, y^+) \circ v^+ + u^+ \circ L(v^+, w^+, y^+)$$
holds for all $u^+, v^+, w^+, y^+ \in \mathfrak{X}^+(G)$. Let $u^+, v^+ \in \mathcal{D}_{\text{Pois}}$. By the definition of the Poisson--algebra distribution, the Leibnizator vanishes identically when its first argument is an element of $\mathcal{D}_{\text{Pois}}$, implying that $L(u^+, w^+, y^+) = 0$ and $L(v^+, w^+, y^+) = 0$ for all $w^+, y^+ \in \mathfrak{X}^+(G)$. Substituting these values into the Hertling--Manin identity, it follows that $L(u^+ \circ v^+, w^+, y^+) = 0$. Thus, $u^+ \circ v^+ \in \mathcal{D}_{\text{Pois}}$, confirming that the distribution is stable under the commutative product $\circ$.
\end{proof}

\begin{prop}\label{involutive}
Let $(G, S)$ be an F--Lie group and let $\mathcal{D}_{\text{Pois}}$ denote its associated Poisson--algebra distribution. Then, $\mathcal{D}_{\text{Pois}}$ is an involutive distribution; specifically, the space of left--invariant vector fields $\mathcal{D}_{\text{Pois}}$ is a Lie subalgebra of the Lie algebra $\mathfrak X^+(G)$.
\end{prop}

\begin{proof}
Let $u^+, v^+ \in \mathcal{D}_{\text{Pois}}$ be left--invariant vector fields. By the definition of the Poisson--algebra distribution (cf. Definition \ref{nulidad}), these fields satisfy the vanishing condition $\mathcal{L}_{u^+} S = 0$ and $\mathcal{L}_{v^+} S = 0$. We appeal to the fundamental identity relating the Lie bracket of vector fields to the commutator of their corresponding Lie derivatives
$\mathcal{L}_{[u^+, v^+]} = [\mathcal{L}_{u^+}, \mathcal{L}_{v^+}]$ (see  \cite[Proposition 3.4]{KN}). Applying this operator identity to the $(1,2)$--tensor field $S$, we obtain:
$$
\mathcal L_{[u^+,v^+]} S = [\mathcal L_{u^+},\mathcal L_{v^+}]S = \mathcal L_{u^+}(\mathcal L_{v^+} S)-\mathcal L_{v^+}(\mathcal L_{u^+} S)=0.
$$
Since the Lie derivative of $S$ along the bracket $[u^+, v^+]$ vanishes identically, the vector field $[u^+, v^+]$ resides in the kernel of the map $\mathcal{L}_{\bullet} S$. Consequently, $[u^+, v^+] \in \mathcal{D}_{\text{Pois}}$, which establishes that $\mathcal{D}_{\text{Pois}}$ is an involutive distribution.
\end{proof}

Recall that a foliation on a smooth manifold $M$ is defined as a partition of $M$ into a collection of connected, disjoint, immersed submanifolds, termed the leaves of the foliation \cite{La}. In light of Proposition \ref{involutive}, the Poisson--algebra distribution $\mathcal{D}_{\text{Pois}}$ is involutive; consequently, the Frobenius Theorem ensures its complete integrability. Our investigation focuses on the resulting foliation, where the leaves are the maximal integral manifolds of $\mathcal{D}_{\text{Pois}}$.

\begin{theorem}\label{thm:splitting}
 Let $(G, S)$ be an F--Lie group. We define the Poisson subspace at the identity by:
 $$(\mathcal{D}_{\text{Pois}})_e := \{ u \in \mathfrak{g}; \;  \mathcal{L}_{u^+} S = 0 \}$$
 where $u^+$ denotes the unique left--invariant vector field such that $u^+_e = u$. Then, the following assertions hold:
\begin{enumerate}
\item $(\mathcal{D}_{\text{Pois}})_e$ is a Lie subalgebra of $\mathfrak{g}$ whose integration yields a unique connected immersed Lie subgroup $H \subseteq G$. The leaves of the Poisson--algebra distribution are precisely the $H$--orbits $\{gH; \; g \in G\}$, each of which constitutes an immersed connected submanifold of $G$.

\item Let $r_h: G \to G$ denote the right translation by $h \in G$, defined by $r_h(g) = gh$. For any $u \in (\mathcal{D}_{\text{Pois}})_e$, the flow $\Phi^{u^+}_t: G \to G$ of the left--invariant vector field $u^+$, given by $\Phi^{u^+}_t(g) = r_{\exp(tu)}(g)$, is an automorphism of the tensor field $S$; that is, $(\Phi^{u^+}_t)^*S = S$. Consequently, the right action of $H$ on $G$ preserves $S$. In particular, $S$ is invariant along each $H$--orbit in the sense that $(r_h)^*S = S$ for all $h \in H$.

\item Furthermore, if the connected Lie subgroup $H$ is closed in $G$, then the canonical projection $\pi: G \to G/H$ constitutes a principal $H$--bundle such that the tensor field $S$ is constant along the fibers of $\pi$.
\end{enumerate}
\end{theorem}

\begin{proof}
\begin{enumerate}
\item Given the canonical vector space isomorphism $\mathfrak{X}^+(G) \cong \mathfrak{g}$ and the result of Proposition \ref{involutive}, it follows that $(\mathcal{D}_{\text{Pois}})_e$ is a Lie subalgebra of $\mathfrak{g}$. By the  the global version of Lie’s third theorem, there exists a unique connected immersed Lie subgroup $H \subseteq G$ whose Lie algebra is $(\mathcal{D}_{\text{Pois}})_e$. Since $\mathcal{D}_{\text{Pois}}$ is a left--invariant distribution of constant rank generated by a Lie subalgebra of $\mathfrak{g}$, it is completely integrable. The maximal integral manifolds are precisely the $H$--orbits $gH$, which constitute immersed connected submanifolds of $G$.

\item Fix $u \in (\mathcal{D}_{\text{Pois}})_e$. The flow $\Phi^{u^+}_t$ of the corresponding left--invariant vector field $u^+$ is generated by right multiplication: $\Phi^{u^+}_t = r_{\exp(tu)}$. By the definition of the Lie derivative of a tensor field, we have:
$$\frac{d}{dt}\Big|_{t=0} (r_{\exp(tu)})^*S = \mathcal{L}_{u^+}S = 0 \; .$$
This implies that the pullback $(r_{\exp(tu)})^*S = S$ for all $t \in \mathbb{R}$. Since $H$ is connected, it is generated by the image of the exponential map; thus, any $h \in H$ can be expressed as a finite product of elements of the form $\exp(u_i)$. It follows that $(r_h)^*S = S$ for all $h \in H$. Consequently, the right action of $H$ on $G$ preserves $S$, rendering $S$ invariant along each $H$--orbit.

\item Assume that the connected Lie subgroup $H \subseteq G$ integrating $(\mathcal{D}_{\text{Pois}})_e$ is closed in $G$. Standard results in Lie theory ensure that the right action of $H$ on $G$ is both free and proper. Furthermore, the quotient space $G/H$ admits a unique smooth manifold structure such that the canonical projection $\pi: G \to G/H$ is a smooth submersion \cite[Theorem 21.17]{Lee}.\\

Because $\pi$ is a smooth submersion, for any $gH \in G/H$, there exists a local smooth section $\sigma: U \to G$ defined on an open neighborhood $U \subseteq G/H$ such that $\pi \circ \sigma = \text{id}_U$ \cite[Theorem 4.26]{Lee}. This section induces a local trivialization of G, $ U \times H \cong \pi^{-1}(U)$, via the diffeomorphism $\phi: (u, h) \mapsto \sigma(u)h$. It follows that the fibers of $\pi$ are smooth submanifolds of $G$, each diffeomorphic to the Lie subgroup $H$. Therefore, $\pi: G \to G/H$ has the structure of a principal $H$--bundle. \\

From the results established in Items (1) and (2), we have already shown that the tensor field $S$ is invariant under the right $H$--action, i.e., $(r_h)^*S = S$ for all $h \in H$. Furthermore, the tangent space to the fibers of $\pi$ at any point $g \in G$ (the vertical distribution $\mathcal V$ of the bundle) is given by:
$$\mathcal V_g=\ker(d\pi_g) = T_g(gH) = (l_g)_* (\mathcal{D}_{\text{Pois}})_e \;.$$
By the construction of the Poisson--algebra distribution, the tensor field $S$ is constant along the fibers of $\pi$.

\end{enumerate}
\end{proof}

\begin{rem}\label{Foliation}
The local geometry of an F--Lie group $(G, S)$ in the neighborhood of a point $p \in G$ is governed by the structural decomposition provided in Theorem \ref{thm:splitting}-(3). Specifically, for a sufficiently small neighborhood $V \subseteq G$ of $p$, we have a local diffeomorphism $V \cong U \times H$. Under this decomposition, $H$ is the connected closed subgroup integrating the Poisson subalgebra $(\mathcal{D}_{\text{Pois}})_e$, representing the leaf of the foliation passing through $p$, while $U \subseteq G/H$ serves as a transverse parameter space. 

Crucially, because the tensor $S$ is invariant under the right $H$--action and its Lie derivative vanishes along the vertical distribution $\mathcal{V} \cong \mathfrak{h}$, the structural data of the F--manifold is locally constant along the leaves. Consequently, the entire functional variation of the commutative product $\circ$ is concentrated in the transverse directions (modeled on $U$), effectively reducing the study of the F--manifold structure to the geometry of the quotient manifold.
\end{rem}

The following theorem connects the Poisson--algebra distribution $\mathcal{D}_{\text{Pois}}$ directly to the local symmetries of the canonical F--connection $\nabla$ on the F--Lie group $G$.

\begin{theorem}\label{killing-affine}
Let $G$ be the simply connected F--Lie group endowed with the canonical connection $\nabla$ defined by $\nabla_{v^+} w^+ = \tfrac{1}{2} [v^+, w^+] + S(v^+, w^+)$. Let $\mathcal{D}_{\text{Pois}}$ be the Poisson--algebra distribution on $G$. For every left--invariant vector field $u^+$ on $G$ and all left--invariant vector fields $v^+, w^+$ on $G$, the following identity holds:
$$(\mathcal L_{u^+}\nabla)(v^+,w^+)=(\mathcal L_{u^+} S)(v^+,w^+)=L(u^+,v^+,w^+) \, .
$$
In particular, for every point $g \in G$, the fiber of the Poisson--algebra distribution at $g$ is characterized geometrically as the set of vectors that generate local infinitesimal affine transformations.

Equivalently, the sections of $\mathcal{D}_{\text{Pois}}$ are precisely the vector fields whose flows preserve the affine structure defined by the connection $\nabla$, i.e., left--invariant vector fields $u^+$ such that $\mathcal{L}_{u^+} \nabla = 0$.
\end{theorem}

\begin{proof}
The Lie derivative of an affine connection $\nabla$ with respect to a vector field $u^+$ is defined by:
$$
(\mathcal L_{u^+}\nabla)(v^+,w^+)
  = \mathcal L_{u^+}(\nabla_{v^+} w^+)
    - \nabla_{\mathcal L_{u^+} v^+} w^+
    - \nabla_{v^+}(\mathcal L_{u^+} w^+).
$$
Substituting the definition of the canonical F--connection $\nabla_{v^+} w^+ = \tfrac{1}{2} [v^+, w^+] + S(v^+, w^+)$ into this formula:
$$
(\mathcal L_{u^+} \nabla)(v^+,w^+)
  = \tfrac12\left(\mathcal L_{u^+}[v^+,w^+]
     - [\mathcal L_{u^+} v^+,w^+]
     - [v^+,\mathcal L_{u^+} w^+]\right)
    + (\mathcal L_{u^+} S)(v^+,w^+).
$$
The first group of terms, multiplied by $\tfrac{1}{2}$, vanishes identically due to the Jacobi identity. Thus, we are left with the identity: $
(\mathcal L_{u^+} \nabla)(v^+,w^+) = (\mathcal L_{u^+} S)(v^+,w^+)
$.

By Lemma \ref{HMwithS}, the Lie derivative of the F--manifold tensor $S$ is related to $L$ by $\mathcal L_{u^+} S=L(u^+,\cdot,\cdot)$. Combining this with the established identity, we have:
$$
\mathcal L_{u^+}\nabla=\mathcal L_{u^+} S=L(u^+,\cdot,\cdot) \, .
$$
The left--invariant vector field $u^+$ belongs to the Poisson--algebra distribution $\mathcal{D}_{\text{Pois}}$ if and only if $\mathcal{L}_{u^+} S = 0$ (Definition
\ref{nulidad}). Therefore, $u^+ \in \mathcal{D}_{\text{Pois}}$ if and only if $u^+$ is a local infinitesimal affine transformation (or symmetry) of the F--Lie group.
\end{proof}

\subsection{The Poisson Foliation}
The left--invariance of the Lie bracket $[\cdot, \cdot]$ and the assumed left--invariance of the commutative product $\circ$ collectively imply that the Leibnizator $L$ is a left--invariant trilinear map. By construction, the local vector fields spanning the Poisson--algebra distribution $\mathcal{D}_{\text{Pois}}$ are restricted to the space of left--invariant fields $\mathfrak{X}^+(G)$. Consequently, $\mathcal{D}_{\text{Pois}}$ constitutes a left--invariant subbundle of the tangent bundle $TG$. The fiber of $\mathcal{D}_{\text{Pois}}$ at any point $g \in G$ is uniquely determined by its evaluation at the identity $e \in G$ via the differential of the left translation:
$$(\mathcal{D}_{\text{Pois}})_g = (d l_g)_e (\mathcal{D}_{\text{Pois}})_e \quad \forall g \in G \; .$$ 
As a smooth left--invariant subbundle, $\mathcal{D}_{\text{Pois}}$ possesses constant rank globally, ensuring that the distribution is regular on $G$.\\

The regularity of $\mathcal{D}_{\text{Pois}}$, coupled with its involutivity (as established in Proposition \ref{involutive}), ensures its complete integrability by the Frobenius Theorem. The maximal integral manifold (leaf) passing through the identity $e$ integrates the Lie subalgebra $(\mathcal{D}_{\text{Pois}})_e \subseteq \mathfrak{g}$ to a unique connected immersed Lie subgroup $H \subseteq G$. Due to the left--invariance of the distribution, every other leaf of the foliation is obtained by the left translation of $H$. Thus, $\mathcal{D}_{\text{Pois}}$ induces a global foliation of $G$ whose leaves are the left cosets $\{gH\}_{g \in G}$.

\begin{theorem}\label{ProperDpois}
Let $G$ be an F--Lie group equipped with the left--invariant connection defined by $\nabla_{u^+} v^+ = \tfrac{1}{2}[u^+, v^+] + u^+ \circ v^+$. The Poisson-algebra distribution $\mathcal{D}_{\text{Pois}}$ satisfies the following geometric properties:
\begin{enumerate}
\item The distribution $\mathcal{D}_{\text{Pois}}$ is autoparallel with respect to $\nabla$. That is, for any pair of left--invariant vector fields $u^+, v^+$ taking values in $\mathcal{D}_{\text{Pois}}$, the covariant derivative $\nabla_{u^+} v^+$ also resides in $\mathcal{D}_{\text{Pois}}$.

\item The leaves of the foliation $\mathcal{F}_{\text{Pois}}$ induced by $\mathcal{D}_{\text{Pois}}$ are totally geodesic submanifolds of $G$. Equivalently, any geodesic of $G$ starting tangent to a leaf remains within that leaf for all time.

\item The holonomy algebra of the connection $\nabla$ restricted to the leaves coincides with the specialized Poisson holonomy algebra, characterized by:
$$\mathfrak{hol}(\nabla^{\text{Pois}}) = \operatorname{ad}_{[\mathfrak{g}, \mathfrak{g}]} + A_{[\mathfrak{g}, [\mathfrak{g}, \mathfrak{g}]]}$$
where $A$ denotes the endomorphism $A_u = \tfrac{1}{2}\operatorname{ad}_u + S_u$.
\end{enumerate}

\end{theorem}

\begin{proof}
\begin{enumerate}
\item Let $u, v \in (\mathcal{D}_{\text{Pois}})_e \subseteq \mathfrak{g}$ be elements of the Poisson subspace at the identity. By the algebraic closure properties established in Proposition  \ref{subalgebra} and Proposition \ref{involutive}, it follows that both  the commutative product $u \circ v$ and the Lie bracket $[u, v]$ reside in $(\mathcal{D}_{\text{Pois}})_e$. By the definition of the left--invariant connection, we have: $\nabla_{u^+} v^+ = \tfrac{1}{2}[u^+, v^+] + u^+ \circ v^+$. Evaluating this expression at any point $g \in G$, the left--invariance of the operations ensures that the resulting vector field $\nabla_{u^+} v^+$ remains a section of the left--invariant subbundle $\mathcal{D}_{\text{Pois}}$. Consequently, $\mathcal{D}_{\text{Pois}}$ is autoparallel with respect to $\nabla$.

\item In the context of affine geometry, the autoparallelism of a distribution $\mathcal{D}$ is equivalent to the condition that any geodesic $\gamma(t)$ of the connection $\overline \nabla$ whose initial tangent vector $\dot{\gamma}(0)$ belongs to $\mathcal{D}$ must remain tangent to the distribution for its entire duration. By the Frobenius Theorem, $\mathcal{D}_{\text{Pois}}$ integrates to a foliation whose leaves are, by definition, totally geodesic submanifolds of the affine manifold $(G, \nabla)$.

\item The characterization of the holonomy algebra $\mathfrak{hol}(\nabla|_{\mathcal{F}_{\text{Pois}}})$ follows directly from the restriction of the ambient connection to the integral manifolds of the distribution. By utilizing the inclusion established in Proposition \ref{thm:holonomy} and the structural analysis of the Leibnizator's kernel, the holonomy algebra is precisely identified as the specialized Poisson holonomy algebra $\operatorname{ad}_{[\mathfrak{g}, \mathfrak{g}]} + A_{[\mathfrak{g}, [\mathfrak{g}, \mathfrak{g}]]}$.

\end{enumerate}
 \end{proof}

\begin{rem}
 Let $G$ be an F--Lie group such that the Poisson-algebra distribution $\mathcal{D}_{\text{Pois}}$ is non-trivial (i.e., $\mathcal{D}_{\text{Pois}} \neq \{0\}$). Under this condition, the manifold $G$ admits a canonical foliation by connected, immersed submanifolds. Since these leaves are integral manifolds of a distribution characterized by the vanishing of the Leibnizator, they naturally inherit the structure of F--Poisson Lie groups.
\end{rem}

\subsection{The Heisenberg Lie Algebra} 
In this subsection, we conduct a specialized study of F$_{\text{man}}$--structures defined over the Heisenberg Lie algebra, which we denote by $\mathfrak{n}_{3,1}$ (following the convention in \cite{CLD}). Our objective is to explicitly compute the curvatures, holonomies, and Poisson--algebra distributions associated with the canonical F--connection in this specific algebraic setting.

The classification of F$_{\text{man}}$--algebras up to dimension three has been previously established \cite{CLD}. The four classified F$_{\text{man}}$--algebra structures on the three--dimensional Heisenberg Lie algebra $\mathfrak{n}_{3,1}$, with basis $\{e_1, e_2, e_3\}$, are summarized below (see Theorems 3.1, 3.3, 3.7, and 3.8 of \cite{CLD}). 

\begin{table}[h]
\centering
\begin{tabular}{|c|c|c|c|}
\hline
\textbf{Case} & \textbf{\small Non-Zero Lie Brackets} & \textbf{\small Product $\circ$} & \textbf{\small Non-Zero Products} \\
\hline
\text{(1a)} & $\mathfrak{n}_{3,1}^{(1)}:\;\;\;\;[e_2, e_3] = e_1$ & $A_3$ & $e_1 \circ e_2 = e_1$,\; $e_3 \circ e_3 = e_1$  \\
\hline
\text{(1b)} & $\mathfrak{n}_{3,1}^{(1)}:\;\;\;\; [e_2, e_3] = e_1$ & $A_4$ & $e_2 \circ e_3 = e_1$,\; $e_3 \circ e_3 = e_2$ \\
\hline
\text{(2a)} & $\mathfrak{n}_{3,1}^{(2)}:\;\;\;\; [e_1, e_3] = e_2$ & $D_1$ & $e_1 \circ e_1 = e_2$,\; $e_3 \circ e_3 = e_3$ \\
\hline
\text{(2b)} & $\mathfrak{n}_{3,1}^{(2)}:\;\;\;\; [e_1, e_3] = e_2$ & $D_2$ & $e_1 \circ e_1 = e_2$,\; $e_1 \circ e_3 = e_1$ \\
\hline
\end{tabular}
\caption{F$_{\text{man}}$--algebra structures on the Heisenberg Lie algebra ($\mathfrak{n}_{3,1}$)} \label{tab:fman_n31}
\end{table}

It can be directly verified that the four combinations of Lie brackets and commutative associative products listed in the Table \ref{tab:fman_n31} satisfy the Hertling--Manin condition, thereby defining an F$_{\text{man}}$--algebra structure. For subsequent analysis, we denote these four F$_{\text{man}}$--algebras as follows: $(\mathfrak{n}_{3,1}^{(1)}, A_3)$, $(\mathfrak{n}_{3,1}^{(1)}, A_4)$, $(\mathfrak{n}_{3,1}^{(2)}, D_1)$, and $(\mathfrak{n}_{3,1}^{(2)}, D_2)$.\\

The geometric properties of the F$_{\text{man}}$--algebras defined over the Heisenberg Lie algebra $\mathfrak{n}_{3,1}$ (as classified in Table \ref{tab:fman_n31}) are summarized in the following theorem. These properties relate to the Poisson condition ($L \equiv 0$), the flatness of the canonical connection $\nabla$ ($R \equiv 0$), and the structure of the Poisson--algebra distribution $\mathcal{D}_{\text{Pois}}$.
 
\begin{theorem}
The four F$_{\text{man}}$--algebra structures over the Heisenberg Lie algebra $\mathfrak{n}_{3,1}$ possess the following geometrical characteristics:

\begin{enumerate}
\item The F$_\text{man}$--algebra $(\mathfrak n_{3,1}^{(1)}, A_3)$:
\begin{itemize}

\item It constitutes a Poisson algebra, as the Leibnizator vanishes identically ($L \equiv 0$).

\item The associated canonical F--connection $\nabla$ is flat, with a vanishing curvature tensor ($R \equiv 0$).

\end{itemize}

\item The F$_\text{man}$--algebra $(\mathfrak n_{3,1}^{(1)}, A_4)$: 

\begin{itemize}
\item It is a non--Poisson algebra ($L \not\equiv 0$).

\item The associated Poisson--algebra distribution $\mathcal{D}_{\text{Pois}}$ is of rank two. 

\item The canonical F--connection $\nabla$ is flat ($R \equiv 0$).

\end{itemize}

\item The F$_\text{man}$--algebra $(\mathfrak n_{3,1}^{(2)}, D_1)$:

\begin{itemize}
\item It is a non--Poisson algebra ($L \not\equiv 0$).

\item The Poisson--algebra distribution $\mathcal{D}_{\text{Pois}}$ is of rank two.

\item The canonical F--connection $\nabla$ possesses non--vanishing curvature ($R \not\equiv 0$).

\item The holonomy Lie algebra $\mathfrak{hol}(\nabla)$ is abelian and of dimension one. 
\end{itemize}

\item The F$_\text{man}$--algebra $(\mathfrak n_{3,1}^{(2)}, D_2)$: 
\begin{itemize}
\item It is a non-Poisson algebra ($L \not\equiv 0$).

\item The Poisson--algebra distribution $\mathcal{D}_{\text{Pois}}$ is of rank two

\item The canonical F--connection $\nabla$ possesses non--vanishing curvature ($R \not\equiv 0$).

\item The holonomy Lie algebra $\mathfrak{hol}(\nabla)$ is abelian and one--dimensional.
\end{itemize}
\end{enumerate}
\end{theorem}

\begin{proof}
The results are established by explicitly computing the Leibnizator $L$ and the Riemann curvature tensor $R$ for each F$_\text{man}$--algebra structure, utilizing the decomposition derived in Theorem \ref{curvaturethm}:
$$R(u, v) w = R^0(u, v) w + R^L(u, v) w$$
where $R^0(u, v)w = -\tfrac{1}{4}[[u, v], w]$ denotes the intrinsic Lie curvature and $R^L(u, v)w = \tfrac{1}{2}\left(L(u, v, w) - L(v, u, w)\right)$ represents the contribution from the F--structure.\\

Let $\{e_1, e_2, e_3\}$ be the basis for $\mathfrak{n}_{3,1}$ (cf. Table \ref{tab:fman_n31}) and $\{\theta^1, \theta^2, \theta^3\}$ its corresponding dual basis, such that $\theta^i(e_j)=\delta^i_j$.

\begin{enumerate}
\item F$_{\text{man}}$--algebra $(\mathfrak{n}_{3,1}^{(1)}, A_3)$:

\begin{itemize}
\item \textbf{Leibnizator}: Direct computation shows $L(e_i, e_j, e_k) = 0$ for all indices, hence $L \equiv 0$. This confirms the structure is a Poisson algebra.

\item \textbf{Curvature}: Both $R^L$ and $R^0$ vanish identically (the latter due to the nilpotency of the Heisenberg algebra acting on itself, $ [[e_i, e_j], e_k] = 0$ for $i,j,k=1,2,3$). Thus, $R = 0$, and the connection is flat.

\end{itemize}

\item F$_\text{man}$--algebra $(\mathfrak n_{3,1}^{(1)}, A_4)$:

\begin{itemize}
\item \textbf{Leibnizator}: The non--vanishing components yield $L = -\theta^3 \otimes \theta^3 \otimes \theta^3 \otimes e_1$. Since $L \not\equiv 0$, it is a non--Poisson F$_\text{man}$--algebra.

\item \textbf{Poisson--algebra Distribution}: $(\mathcal{D}_{\text{Pois}})_e$ is the kernel of the map $u \mapsto L(u, \cdot, \cdot)$. Since $L(e_1, \cdot, \cdot)=0 = L(e_2, \cdot, \cdot)$, the subspace is spanned by $\{e_1, e_2\}$, which is of rank two.

\item \textbf{Curvature}: Both $R^0$ and $R^L$ vanish, resulting in a flat connection ($R = 0$).

\end{itemize}

\item F$_\text{man}$--algebra $(\mathfrak n_{3,1}^{(2)}, D_1)$:\

\begin{itemize}
\item \textbf{Leibnizator}: $L = \theta^1 \otimes \theta^3 \otimes \theta^3 \otimes e_2 \not\equiv 0$. Then, it is a non--Poisson F$_\text{man}$--algebra.

\item \textbf{Poisson--algebra Distribution}: $(\mathcal{D}_{\text{Pois}})_e = \operatorname{span}\{e_2, e_3\}$, having rank two.

\item \textbf{Curvature}: We find $R^0 = 0$, while the F--structure term yields:
$$R^L = \tfrac{1}{2}(\theta^1 \otimes \theta^3 - \theta^3 \otimes \theta^1) \otimes \theta^3 \otimes e_2 = (\theta^1 \wedge \theta^3) \otimes \theta^3 \otimes e_2 \; .$$
As $R = R^L \neq 0$, the structure is non--flat.

\item \textbf{Holonomy}: We characterize the connection $\nabla$ and the curvature $R$ as vector space endomorphisms taking values in differential 1--forms and 2--forms, respectively. Their explicit tensor expressions are given by:
\begin{align*}
R =& (\theta^1 \wedge \theta^3) \otimes \theta^3 \otimes e_2 \, , \\
\nabla=&\tfrac 12 \theta^1 \otimes (\theta^3 \otimes e_2)- \tfrac 12 \theta^3 \otimes (\theta^1 \otimes e_2)+\theta^1 \otimes (\theta^1 \otimes e_2) +\theta^3 \otimes (\theta^3 \otimes e_3) \, .
\end{align*}
The action of these operators on a general vector $v = v^1 e_1 + v^2 e_2 + v^3 e_3 \in \mathfrak{n}_{3,1}$ is computed as follows. The curvature operator $R$ acts on $v$ as an endomorphism-valued 2-form:
\begin{equation*}
R^L(v)=\left( \theta^1 \wedge \theta^3 \right) \otimes e_2 \;\; (\theta^3 (v))= \left( \theta^1 \wedge \theta^3 \right) \otimes (v^3 e_2) 
\end{equation*}
Similarly, the connection 1-form acts on $v$ as:
\begin{align*}
\nabla (v)=&\tfrac 12 \theta^1 \otimes (\theta^3(v) \;  e_2  )- \tfrac 12\theta^3 \otimes (\theta^1(v)\; e_2)  + \theta^1 \otimes (\theta^1(v) \; e_2 ) +\theta^3   \otimes (\theta^3(v)\; e_3) \, ,\\
=&\theta^1 \otimes \left(\frac{v^3}{2}-v^1 \right) e_2 + \theta^3  \otimes \left( v^3 e_3-\tfrac{v^1}{2} e_2 \right) 
\, .
\end{align*}
Applying the Ambrose--Singer Theorem, and noting that $[\nabla, R] = 0$, the holonomy Lie algebra $\mathfrak{hol}(\nabla)$ is spanned by the curvature endomorphisms and is therefore one--dimensional.

\end{itemize}

\item F$_\text{man}$--algebra $(\mathfrak n_{3,1}^{(2)}, D_2)$:
\begin{itemize}
\item \textbf{Leibnizator}: $L = -\theta^3 \otimes \theta^3 \otimes \theta^1 \otimes e_2 - \theta^3 \otimes \theta^1 \otimes \theta^3 \otimes e_2 \not\equiv 0$. Then, it is a non--Poisson F$_\text{man}$--algebra.

\item \textbf{Poisson--algebra Distribution}: $(\mathcal{D}_{\text{Pois}})_e = \operatorname{span}\{e_1, e_2\}$, having rank two.

\item \textbf{Curvature and Holonomy}: The curvature calculation confirms $R^0 = 0$ and $R^L \neq 0$. The resulting geometric data is structurally analogous to the $D_1$ case, yielding a non--flat connection and a one--dimensional abelian holonomy algebra.
\end{itemize}

\end{enumerate}
\end{proof}

\begin{rem}\label{rem:holonomy_heis}
Since the Heisenberg Lie algebra $\mathfrak{n}_{3,1}$ is a two--step nilpotent Lie algebra, the triple Lie bracket vanishes. Consequently, the intrinsic Lie curvature term $R^0$ vanishes identically. The total curvature $R$ is therefore entirely determined by the Leibnizator $L$: $R(u, v) w = R^L(u, v) w = \tfrac{1}{2}\left(L(u, v, w) - L(v, u, w)\right)$.\\

In the two non--flat cases, $(\mathfrak{n}_{3,1}^{(2)}, D_1)$ and $(\mathfrak{n}_{3,1}^{(2)}, D_2)$, the curvature tensor $R(u, v)$ has a constant rank one as an endomorphism on $\mathfrak{n}_{3,1}$. The curvature can be compactly expressed in the form $R(u, v) = \Omega(u, v)\, J$, where $\Omega$ is a fixed non--zero 2-form, and $J \in \mathfrak{gl}(\mathfrak{n}_{3,1})$ is the endomorphism defined on the basis by $J(e_3) = e_2$ and $J(e_1) = J(e_2) = 0$.\\

The endomorphism $J$ satisfies the nilpotency condition $J^2 = 0$. The Lie algebra of the holonomy group $\mathfrak{hol}(\nabla)$ is one--dimensional and spanned by $J$. Consequently, the associated holonomy group consists of unipotent transformations of the form $\exp(tJ) = I + tJ$, which represent infinitesimal displacements along the fixed direction $e_2$.\\

Geometrically, this reveals that although the connection $\nabla$ is not flat ($R \not\equiv 0$), its curvature only generates parallel transport effects in a single, fixed direction ($e_2$). This results in a minimal (rank--one) nontrivial holonomy.

\end{rem}


\section*{Acknowledgements}
S. Casta\~neda-Montoya gratefully acknowledges the financial support provided by FaMAF--UNC and CIEM--CONICET. He also expresses sincere gratitude to the Institute of Mathematics at the Universidad de Antioquia (Medell\'in, Colombia) for their hospitality and support during his research visit, spanning the period from March 2025 to March 2026, where the completion of this project was achieved.


\end{document}